\theoremstyle{plain}
\newtheorem{theorem}{Theorem}
\newtheorem{lemma}{Lemma}
\newtheorem{prop}{Proposition}
\newtheorem{case}{Case}
\newtheorem{subcase}{Subcase}
\theoremstyle{remark}
\newtheorem{Rem}{Remark}
\def\Hilb{\mathop{\mathrm{Hilb}}}
\def\codim{\mathop{\mathrm{codim}}}
\def\Supp{\mathop{\mathrm{Supp}}}
\begin{document}
\title{A note on Tsuji's criterion \\ for numerical triviality}
\author{\thanks{2020 \textit{Mathematics Subject Classification}: 14C20} \thanks{\textit{Key words and phrases}: numerically trivial} Shigetaka FUKUDA \\
Faculty of Education, Gifu Shotoku Gakuen University}

\date{\empty}

\maketitle \thispagestyle{empty}
\pagestyle{myheadings}
\markboth{Shigetaka FUKUDA}{Numerical triviality}
\begin{abstract}
In this study, we give an alternative and elementary proof to Tsuji's criterion for a Cartier divisor to be numerically trivial.
\end{abstract}

\section{Introduction}

In this article every algebraic variety is proper over the field of complex numbers $\mathbf{C}$.

In 1970's Iitaka (\cite{Ii}) initiated the classification theory of higer dimensional algebraic varieties by using the pluricanonical systems.
In 1980's Mori (\cite{Mo}) deepened the Iitaka theory by cutting off the subvarieties of elliptic type. 

In \cite{Tsuji}, Tsuji gave an interesting and useful criterion for a Cartier divisor to be numerically trivial:

\begin{theorem}[Tsuji {\cite[Lemma 5.1]{Tsuji}}, cf.\ Bauer et al.\ {\cite[Theorem 2.4]{BCEKPRSW}}]\label{thm:Tsuji}
Let $f:M \to B$ a surjective morphism between complete varieties.
Let $L$ be a nef Cartier divisor on $M$, $W$ some subvariety of $M$ such that $f(W)=B$ and $B_0$ a subset of $B$ which is a union of countably many proper Zariski-closed subsets.
Assume that:
 \begin{description}
  \item[(1)] for some $b \in B$, $(L,C) = 0$ for every curve $C$ on $f^{-1} (b);$
  \item[(2)] $(L,C) = 0$ for every irreducible curve $C$ on $W$ such that $f(C) \nsubseteq B_0.$
 \end{description}
Then $L$ is numerically trivial.
\end{theorem}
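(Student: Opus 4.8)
The plan is to assume $L \not\equiv 0$ and manufacture a single irreducible curve $C$ on $M$ carrying both $(L,C)>0$ and $(L,C)=0$. First I would reduce to the case where $M$ is projective and normal and $f$ has irreducible general fibres: apply Chow's lemma and normalization to $M$, and replace $f$ by the first map of its Stein factorization, checking that (1) and (2) persist (with $b$ and $B_0$ replaced by a point over $b$ and the preimage of $B_0$). The cases $\dim B=0$ (where $M=f^{-1}(b)$, so (1) is the assertion) and $\dim M=\dim B$ (where $W=M$, so (2) gives $(L,C)=0$ off a countable union of proper closed subsets) are immediate, so I assume $\dim M>\dim B\ge 1$. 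Throughout I use the elementary fact that a nef divisor $L'$ on a projective variety $V$ of dimension $n$ with $(L'\cdot H^{n-1})=0$ for some ample $H$ is numerically trivial; concretely, if $L'\not\equiv 0$ then a very general complete intersection of $n-1$ members of $|mH|$ has positive $L'$-degree while avoiding any prescribed countable union of proper closed subsets.

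Granting $L\not\equiv 0$, I fix an ample $H$ and a point $p\in f^{-1}(b)$, and let $C$ be a very general complete intersection of $n-1$ members of $|mH|$ through $p$, with $m\gg 0$. Then $C$ is irreducible with $(L,C)=m^{n-1}(L\cdot H^{n-1})>0$, and $\Gamma_0:=f(C)$ is an irreducible curve through $b$ (as $p\in C\cap f^{-1}(b)$). Being very general, $\Gamma_0\not\subseteq B_0$ and $X_0:=f^{-1}(\Gamma_0)$ is irreducible and dominates $\Gamma_0$. Since $f(W)=B\supseteq\Gamma_0$, I may choose an irreducible curve $W_0\subseteq W\cap X_0$ with $f(W_0)=\Gamma_0\not\subseteq B_0$, so that (2) yields $(L,W_0)=0$. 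The heart of the construction is then to cut $X_0$ by general hypersurface sections through the curve $C\cup W_0$ down to an irreducible surface $S\subseteq X_0$ containing both $C$ and $W_0$ and still dominating $\Gamma_0$, so that $f|_S\colon S\to\Gamma_0$ is a fibration of a surface over a curve whose fibre over $b$ lies in $f^{-1}(b)$ and which carries $C$ together with the multisection $W_0$.

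On a resolution $\sigma\colon\hat S\to S$ I set $g:=f\circ\sigma\colon\hat S\to\Gamma_0$ and $\hat L:=\sigma^*(L|_S)$, which is nef. All fibres of $g$ are algebraically, hence numerically, equivalent; the fibre $g^{-1}(b)$ maps into $f^{-1}(b)$, so (1) gives $\hat L\cdot g^{-1}(b)=0$, whence $\hat L\cdot F=0$ for the general fibre $F$, with $F^2=0$. The Hodge index theorem then forces $\hat L^2\le 0$, and nefness forces $\hat L^2=0$, so $\hat L\equiv\lambda F$ with $\lambda\ge 0$. The strict transform $\hat W_0$ of $W_0$ is a multisection, so $F\cdot\hat W_0>0$, while $\hat L\cdot\hat W_0=(L,W_0)=0$; hence $\lambda=0$ and $\hat L\equiv 0$. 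In particular $(L,C)=\hat L\cdot\hat C=0$ for the strict transform $\hat C$ of $C$, contradicting $(L,C)>0$. Therefore $L\equiv 0$.

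The conceptual steps — the Hodge-index computation, the use of (1) through the algebraic equivalence of the fibres of $S\to\Gamma_0$, and the use of (2) through the multisection $W_0$ — are short once the surface is in place, so I expect the main difficulty to be the geometric bookkeeping of the construction: arranging, for a very general complete intersection curve $C$ through $p$, that $\Gamma_0=f(C)$ avoids $B_0$ and has irreducible preimage $X_0$, and that the successive hypersurface sections through $C\cup W_0$ remain irreducible and dominate $\Gamma_0$. These are Bertini-type irreducibility statements (treatable by blowing up the base locus $C\cup W_0$ and using very ampleness), but making them simultaneously compatible with the genericity of $C$ and with the fixed special fibre $f^{-1}(b)$ is where the care is needed.
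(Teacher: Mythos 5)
Your proposal is essentially correct, but it is organized quite differently from the paper's proof, and the comparison is instructive. Both arguments have the same kernel: a nef divisor on a surface fibered over a curve, numerically trivial on one fiber and of degree zero on a multisection, is numerically trivial by the Hodge index theorem (this is the paper's Lemma~\ref{lem:BCEKPRSW}, and your $\hat L\equiv\lambda F$, $\lambda=0$ computation is exactly its proof). The difference is in how one reduces to that surface. The paper runs a double induction on $(\dim M,\dim B)$ with an extensive case analysis; it needs Proposition~\ref{prop:genl fib} (a Hilbert-scheme countability argument) to show $L$ is numerically trivial on all fibers outside a countable union $B_0'$, and, for $\dim B\geq 2$, a chain argument through the irreducible components of $g^{-1}(g(C'))$ using connectedness of fibers, because an \emph{arbitrary} curve $C'$ may sit in a component of that preimage far from the one meeting $W$. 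You instead invoke the numerical criterion ``$L$ nef and $(L\cdot H^{n-1})=0$ implies $L\equiv 0$,'' which reduces everything to a \emph{single very general} complete intersection curve $C$ through a point of $f^{-1}(b)$; very-generality is what lets you avoid both Proposition~\ref{prop:genl fib} and the component-chain argument. The price is that your criterion is left unproved (its standard proof uses the same two ingredients the paper isolates: the Hodge index theorem on surfaces and irreducible hyperplane sections through a prescribed curve, i.e.\ Proposition~\ref{prop:hyp plane}), and that the Bertini bookkeeping you defer --- iterated irreducible hyperplane sections of a possibly singular $Y$ through $C\cup W_0$, resolving at each step --- is precisely what Proposition~\ref{prop:hyp plane} is there to supply.

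One statement in your sketch is literally false and should be repaired: $X_0=f^{-1}(\Gamma_0)$ need not be irreducible, because $\Gamma_0$ is forced to pass through the possibly very special point $b$, over which the fiber may jump in dimension or acquire extra components; these contribute vertical components of $X_0$. What is true, and what your argument actually needs, is that after the Stein factorization the general fiber of $f$ is irreducible, so for $\Gamma_0$ not contained in the degeneracy locus (guaranteed by very-generality of $C$) the preimage $f^{-1}(\Gamma_0)$ has a \emph{unique} component dominating $\Gamma_0$; since $C$ and $W_0$ both dominate $\Gamma_0$, they lie in that component, and you should cut that component (not $X_0$) down to the surface $S$. With that correction the construction goes through.
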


Tsuji's criterion for numerical triviality is one of the basic tools to decompose every algebraic variety into the varieties of elliptic type, of parabolic type and of hyperbolic type by cutting off the varieties of parabolic type.
(See Ambro (\cite{Am04}, \cite{Am05}) and Fukuda (\cite{Fk}).)

Tsuji's proof (\cite{Tsuji}) is analytic and the proof (\cite{BCEKPRSW}) by Th.\ Bauer, F. Campana, Th.\ Eckl, S. Kebekus, Th.\ Peternell, S. Rams, T. Szemberg and L. Wotzlaw is algebraic.

In this research note we give an alternative and elementary proof to Tsuji's criterion (Theorem \ref{thm:Tsuji}).
The argument (see Subcase \ref{subsubcas:genl}), which uses the following corollary of the Hodge index theorem, due to Bauer et al.\ is essential:

\begin{lemma}[cf.\ {\cite[Proposition 2.5]{BCEKPRSW}}]\label{lem:BCEKPRSW}
Let $f:M \to B$ a surjective morphism from a complete surface $M$ to a complete curve $B$.
Let $L$ be a nef Cartier divisor on $M$.
Assume that:
 \begin{description}
  \item[(1)]    for some $b \in B$, $(L,C) = 0$ for every curve $C$ on $f^{-1} (b);$
  \item[(2)]    $(L,W) = 0$ for some irreducible curve $W$ on $M$ such that $f(W) = B$.
 \end{description}
Then $L$ is numerically trivial.
\end{lemma}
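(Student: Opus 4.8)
The plan is to reduce to the case of a smooth projective surface and then apply the Hodge index theorem. First I would take a resolution of singularities $\mu : \tilde{M} \to M$; then $\tilde{M}$ is a smooth complete surface, hence projective. Set $\tilde{L} := \mu^{*}L$ (still nef), $\tilde{f} := f \circ \mu$, and let $\tilde{W}$ be the strict transform of $W$. The projection formula shows that the hypotheses survive: since $\tilde{f}^{*}b = \mu^{*}(f^{*}b)$, the fibre condition transfers, and $\tilde{L}\cdot\tilde{W} = \mu^{*}L\cdot\tilde{W} = L\cdot\mu_{*}\tilde{W} = L\cdot W = 0$ with $\tilde{f}(\tilde{W}) = f(W) = B$. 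Conversely, if $\mu^{*}L \equiv 0$ on $\tilde{M}$, then for every curve $C$ on $M$ with strict transform $\tilde{C}$ one has $L\cdot C = L\cdot\mu_{*}\tilde{C} = \mu^{*}L\cdot\tilde{C} = 0$, so $L \equiv 0$ on $M$. Hence it suffices to prove the statement on $\tilde{M}$, and from now on I assume $M$ is smooth projective.

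Next I would record three intersection numbers. Writing $F := f^{*}b$ for the fibre divisor, hypothesis (1) gives $L \cdot F = 0$, because $F$ is a nonnegative integral combination of the curves contained in $f^{-1}(b)$. All fibres of $f$ are algebraically, hence numerically, equivalent (the points of the irreducible curve $B$ are algebraically equivalent) and two distinct fibres are disjoint, so $F^{2} = 0$. Finally, since $f(W) = B$ the induced map $f|_{W}$ is finite surjective, whence $F \cdot W = \deg(f|_{W}) > 0$ by the projection formula.

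Then I would manufacture a divisor of positive self-intersection orthogonal to $L$. Put $D := a F + W$. For every $a$ one has $L \cdot D = a\,(L\cdot F) + (L\cdot W) = 0$, while $D^{2} = 2a\,(F\cdot W) + W^{2}$, which becomes positive once $a$ is large, since $F\cdot W > 0$. Now the Hodge index theorem applies in the following form: on a smooth projective surface a divisor $D$ with $D^{2} > 0$ has the property that any divisor $E$ with $D\cdot E = 0$ satisfies $E^{2} \le 0$, with equality if and only if $E \equiv 0$. Taking $E = L$ gives $L^{2} \le 0$; but $L$ is nef, so $L^{2} \ge 0$, and therefore $L^{2} = 0$. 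By the equality clause of the Hodge index theorem this forces $L \equiv 0$, which is the assertion.

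I expect the only genuine subtlety to be the reduction step: one must verify that passing to the resolution preserves both hypotheses and that numerical triviality descends from $\tilde{M}$ back to $M$, all of which amounts to routine bookkeeping with the projection formula. Once the surface is smooth and projective, the heart of the argument, namely the construction of $D$ and the single application of the Hodge index theorem, is completely elementary.
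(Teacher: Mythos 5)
Your argument is correct in substance, and it is the same underlying idea the paper relies on: the paper does not actually prove Lemma~\ref{lem:BCEKPRSW} itself, but calls it ``a corollary of the Hodge index theorem'' and, in the Remark, reduces it to \cite[Proposition 2.5]{BCEKPRSW} via normalization, Stein factorization and desingularization. You have in effect reproduced that cited argument in a self-contained way: resolve $M$, build a divisor $D=aF+W$ with $D^2>0$ and $L\cdot D=0$, and invoke the signature statement of the Hodge index theorem together with $L^2\geq 0$ for nef $L$. That is exactly the intended mechanism, and your reduction and descent steps via the projection formula are fine.

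One wrinkle you should patch: you write $F:=f^{*}b$ and assert that all points of $B$ are algebraically equivalent, but $B$ is only assumed to be a complete curve, so $b$ may be a singular point at which the ideal sheaf is not locally principal; then $f^{*}b$ is not a well-defined Cartier divisor and the algebraic equivalence of points is being used for divisor classes that do not exist. The fix is routine and is implicit in the paper's Remark: after replacing $M$ by its resolution $\tilde{M}$ (smooth, hence normal), the dominant map $\tilde{f}$ factors through the normalization $\nu\colon B^{\nu}\to B$; choose $b^{\nu}\in\nu^{-1}(b)$ and set $F:=g^{*}b^{\nu}$ for the induced $g\colon\tilde{M}\to B^{\nu}$. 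Since $g^{-1}(b^{\nu})\subseteq \tilde{f}^{-1}(b)$ you still get $L\cdot F=0$ from hypothesis (1), while $F^{2}=0$ and $F\cdot \tilde{W}=\deg(g|_{\tilde{W}})>0$ hold because $B^{\nu}$ is smooth. With that substitution your proof goes through verbatim.
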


\begin{Rem}
In the statement of Lemma \ref{lem:BCEKPRSW}, the condition (1) immediately implies that $L$ is numerically trivial on every general fiber of the morphism $f$, by considering the flattening.
By the normalization, the Stein factorization and the desingularization, the article (\cite[Proposition 2.5]{BCEKPRSW}), for an algebraically fibered surface, implies the assertion of Lemma \ref{lem:BCEKPRSW}.
\end{Rem}

\section{Elementary proof of Main Theorem \ref{thm:Tsuji}}

\begin{proof}
We prove the assertion by induction on $(\dim M, \dim B)$.

First we take a commutative diagram Figure \ref{fig:CDStein}:

\clearpage 
\begin{figure}[h]
$$\begin{CD}
M' @>\text{$g$}>> B' \\
@V\text{$\mu$}VV @VV\text{$\nu$}V \\
M @>\text{$f$}>> B \\
\end{CD}$$
\caption{The Stein factorization for $f$}
\label{fig:CDStein}
\end{figure}

with the following properties:
 \begin{description}
  \item[(1)] $M'$ and $B'$ are nonsingular projective varieties;
  \item[(2)] $\mu$ is a birational morphism;
  \item[(3)] $\nu$ is a generically finite morphism;
  \item[(4)] $g$ is a morphism with only connected fibers.
 \end{description}
There exists some irreducible component $W'$ of $\mu^{-1} (W)$ such that $g(W') = B'$.
We set $L' := \mu^{*} L$.

The locus $g(\bigcup \{ C' \vert C'$ is an irreducible curve on $M'$, $g(C')$ is a point and the intersection number $(L', C')>0 \} )$ is included in a union of at most countably many proper Zariski-closed subsets of $B'$ (see Proposition \ref{prop:genl fib}).
Thus we obtain a union $B_0' \supseteq \nu^{-1} (B_0)$ of countably many proper Zariski-closed subsets of $B'$ with the following two properties:
 \begin{description}
  \item[(1)] $L'$ is numerically trivial on every fiber of $g$ over $B' \setminus B_0'$;
  \item[(2)] $(L', C'_W) = 0$ for every irreducible curve $C'_W$ on $W'$ such that $g(C'_W) \nsubseteq B_0'$.
 \end{description}

It suffices to prove that $(L',C') = 0$ for every irreducible curve $C'$ on $M'$.
We fix an irreducible curve $C'$ on $M'$

\begin{case}\label{cas:nsub}
$g(C') \nsubseteq B_0'$.
This case divides into Subcases \ref{subcase 1} and \ref{subcase 2}.
\end{case}

\begin{subcase}\label{subcase 1}
$g(C') \nsubseteq B_0'$ and $g(C')$ is a point.
\end{subcase}
We have $(L',C') = 0$ from (1).

\begin{subcase}\label{subcase 2}
$g(C') \nsubseteq B_0'$ and $g(C')$ is a curve.
This subcase divides into Subcases \ref{subcase 3} and \ref{subsubcas:genl}.
\end{subcase}

\begin{subcase}\label{subcase 3}
$g(C') \nsubseteq B_0'$, $g(C')$ is a curve and $\dim B = 1$.
This subcase divides into Subcases \ref{subcase 4}, \ref{subcase 5} and \ref{subcase 6}.
\end{subcase}

We note that $g(W_1)=g(C')=B'$ for some irreducible curve $W_1$ on $W'$.

\begin{subcase}\label{subcase 4}
$g(C') \nsubseteq B_0'$, $g(C')$ is a curve, $\dim B = 1$ and $\dim M =1$.
\end{subcase}

$M' = W' = W_1 = C'$.
Thus $(L', C') = 0$.

\begin{subcase}\label{subcase 5}
$g(C') \nsubseteq B_0'$, $g(C')$ is a curve, $\dim B = 1$ and $\dim M = 2$.
\end{subcase}

Because $(L', W_1) = 0$ from (2), Lemma \ref{lem:BCEKPRSW} implies that $L'$ is numerically trivial and thus $(L',C') = 0$.

\begin{subcase}\label{subcase 6}
$g(C') \nsubseteq B_0'$, $g(C')$ is a curve, $\dim B = 1$ and $\dim M \geqq 3$.
\end{subcase}

Because the codimension $\codim (W_1 \cup C', M') \geqq 2$, we have an irreducible hyperplane section $H$ of $M'$ that includes $W_1$ and $C'$ (see Proposition \ref{prop:hyp plane}).
Then $L' \mid_H$ is numerically trivial from the induction hypothesis.
Consequently $(L',C') = 0$.

\begin{subcase}[cf.\ {\cite[2.1.2]{BCEKPRSW}}]\label{subsubcas:genl}
$g(C') \nsubseteq B_0'$, $g(C')$ is a curve and $\dim B \geqq 2$.
\end{subcase}

Let $S:=\{ S_i \}$ be the set of irreducible components of $g^{-1}(g(C'))$.
We note that $g(W_1)=g(C')$ for some irreducible curve $W_1$ on $W'$.
Thus $L' \mid_{S_1}$ is numerically trivial for some $S_1 \in S$ such that $S_1 \supseteq W_1$ from the property (2) and from the induction hypothesis.

If $g(\cup_{m \ne 1} S_m) = g(C')$, then $g(S_1 \cap (\cup_{m \ne 1} S_m)) = g(C')$ from the connectedness of fibers of $g$ and therefore $g(S_1 \cap S_2) = g(C')$ for some $S_2 \in S$.

Thus $\dim g(\cup_{m \ne 1} S_m) \leqq 0$ or $g(S_1 \cap S_2) = g(C')$.

If $g(S_1 \cap S_2) = g(C')$ and $g(\cup_{m \ne 1, 2} S_m) = g(C')$, then $g((S_1 \cup S_2) \cap (\cup_{m \ne 1, 2} S_m)) = g(C')$ from the connectedness of fibers of $g$ and therefore $g((S_1 \cup S_2) \cap S_3) = g(C')$ for some $S_3 \in S$.
From this argument, we obtain the following properties:
 \begin{description}
  \item[(1)] $g(S_1) = g(S_2) = \cdots = g(S_k) = g(C')$;
  \item[(2)] $g((S_1 \cup S_2 \cup \cdots \cup S_{i-1}) \cap S_i) = g(C')$ for all $i$ with $1 \leqq i \leqq k$;
  \item[(3)] $\dim g(\cup_{i \ne 1, 2, \cdots k} S_i) \leqq 0.$
 \end{description}

The fact that $L' \mid_{S_1}$ is numerically trivial and that $g(S_1 \cap S_2) = g(C')$ implies that $L' \mid_{S_2}$ is numerically trivial from the induction hypothesis.
The fact that $L' \mid_{S_1 \cup S_2}$ is numerically trivial and that $g((S_1 \cup S_2) \cap S_3) = g(C')$ implies that $L' \mid_{S_3}$ is numerically trivial from the induction hypothesis.
This argument implies that $L' \mid_{S_1 \cup S_2 \cup \cdots \cup S_k}$ is numerically trivial.
Because $\dim g(\cup_{i \ne 1, 2, \cdots k} S_i) \leqq 0$, we have that $C' \subseteq S_1 \cup S_2 \cup \cdots \cup S_k$.
Consequently $(L',C') = 0$.

\begin{case}
$g(C') \subseteq B_0'$.
This case divides into Subcases \ref{subcase 8} and \ref{subcase 11}.
\end{case}

\begin{subcase}\label{subcase 8}
$g(C') \subseteq B_0'$ and $\dim M = 2$.
This subcase divides into Subcases \ref{subcase 9} and \ref{subcase 10}.
\end{subcase}

\begin{subcase}\label{subcase 9}
$g(C') \subseteq B_0'$, $\dim M =2$ and $\dim B = 1$.
\end{subcase}

Lemma \ref{lem:BCEKPRSW} implies that $L'$ is numerically trivial and thus $(L', C') = 0$.

\begin{subcase}\label{subcase 10}
$g(C') \subseteq B_0'$, $\dim M =2$ and $\dim B = 2$.
\end{subcase}

Because $W' = M'$, we have that $(L',H) = 0$ for an irreducible hyperplane section $H$ of $M'$ from the property (2) of the divisor $L'$.
The Hodge index theorem implies that $L'$ is numerically trivial.
Thus $(L', C') = 0$.

\begin{subcase}\label{subcase 11}
$g(C') \subseteq B_0'$ and $\dim M \geqq 3$.
\end{subcase}

Because the codimension $\codim (C', M') \geqq 2$, there exists an irreducible hyperplane section $H$ of $M'$ that includes $C'$ (see Proposition \ref{prop:hyp plane}).
We may assume that $g(H) \nsubseteq B_0'$.
Note that, from Case \ref{cas:nsub}, $(L,C'')=0$ for every irreducible curve $C''$ on $H$ such that $g(C'') \nsubseteq B_0'$.
Thus $L' \mid _H$ is numerically trivial from the induction hypothesis.
Consequently $(L',C')=0$.

\end{proof}

\section{Appendix}

In this appendix, we state two elementary propositions and their proofs, which are  well known to the experts, for the readers' convinience.

\begin{prop}\label{prop:genl fib}
Let $f \colon M \to B$ be a surjective morphism between projective varieties and $L$ a nef Cartier divisor on $M$.
We assume that, for some $b \in B$, the intersection number $(L,C) = 0$ for every irreducible curve $C$ on $f^{-1} (b)$.

Then the locus $f(\bigcup \{ C \vert C$ is an irreducible curve on $M$, $f(C)$ is a point and the intersection number $(L,C)>0 \} )$ is included in a union of at most countably many proper Zariski-closed subsets of $B$.
\end{prop}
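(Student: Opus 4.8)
The plan is to parametrize the contracted curves by a relative Hilbert scheme and to exploit two features of such families: the intersection number with $L$ is constant in connected families, and properness makes images in $B$ closed. The hypothesis over the single fibre $f^{-1}(b)$ then rules out, for each relevant family, the fibre $b$ from its image, forcing that image to be a proper closed subset.

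First I would fix an ample divisor $A$ on $M$ and consider the relative Hilbert scheme $\Hilb(M/B)$, whose points over $b \in B$ parametrize closed subschemes of $f^{-1}(b)$. It decomposes as a disjoint union $\coprod_P \Hilb^P(M/B)$ over the countably many Hilbert polynomials $P$ taken with respect to $A$, each $\Hilb^P(M/B)$ being projective over $B$ with finitely many connected components. Restricting to the polynomials of degree one yields countably many connected components $\{T_j\}_{j \in J}$ parametrizing one-dimensional subschemes contained in fibres of $f$; write $\pi_j \colon T_j \to B$ for the structure morphism and $Z_t \subseteq f^{-1}(\pi_j(t))$ for the subscheme attached to $t \in T_j$. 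Every irreducible curve $C$ with $f(C)$ a point is such a $Z_t$ for some $j$ and $t$, and then $f(C) = \pi_j(t)$.

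Next I would record two standard facts. Since $\Hilb^P(M/B) \to B$ is proper, each image $\pi_j(T_j)$ is Zariski-closed in $B$. And since $T_j$ is connected and the family $Z_t$ is flat over $T_j$, the Euler characteristics $\chi(Z_t,(L\mid_{Z_t})^{\otimes m})$ are constant in $t$; reading off the leading coefficient shows that the intersection number $(L,Z_t)$ equals a constant $d_j$ for all $t \in T_j$. The key step is then to use hypothesis (1) to control those $T_j$ with $d_j > 0$: suppose $d_j > 0$ and $b \in \pi_j(T_j)$, so that some $Z_t \subseteq f^{-1}(b)$ has $(L,Z_t) = d_j > 0$. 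Writing $(Z_t)_{\mathrm{red}} = \bigcup_i C_i$ with multiplicities $m_i$, we get $(L,Z_t) = \sum_i m_i (L,C_i)$, and each $(L,C_i) = 0$ by hypothesis (1) since $C_i$ is an irreducible curve on $f^{-1}(b)$; hence $(L,Z_t) = 0$, a contradiction. Therefore $b \notin \pi_j(T_j)$, so $\pi_j(T_j)$ is a proper Zariski-closed subset of $B$ whenever $d_j > 0$.

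Finally, every curve $C$ relevant to the statement (with $f(C)$ a point and $(L,C) > 0$) is some $Z_t$ with $d_j = (L,C) > 0$, and $f(C) = \pi_j(t) \in \pi_j(T_j)$; hence the locus in question is contained in $\bigcup_{\{j \, : \, d_j > 0\}} \pi_j(T_j)$, a union of at most countably many proper Zariski-closed subsets of $B$. I expect the main obstacle to be purely bookkeeping within the Hilbert-scheme formalism — the countability of the components, their properness over $B$, and the deformation-invariance of the intersection number — all of which are standard; the genuine point is the observation that hypothesis (1) forbids $b$ from lying in $\pi_j(T_j)$ as soon as $d_j > 0$.
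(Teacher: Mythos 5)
Your proof is correct, and although it rests on the same three pillars as the paper's argument --- a Hilbert--scheme parametrization of the contracted curves, deformation invariance of intersection numbers in flat connected families, and countability of the components --- your implementation is genuinely different and noticeably leaner. The paper works with the absolute Hilbert scheme $\Hilb(M)$: for each bad curve $C$ it selects a component $W$ of the universal scheme with $\dim W = \dim p_2(W)+1$, then passes through normalization, Stein factorization and a flattening to manufacture a flat family with connected fibres, and invokes constancy of intersection numbers three times (against $f^*A$ to see that every member is contracted by $f$, against an ample divisor on $W$ to see that no member is collapsed by the birational modification, and against $L$ to propagate positivity) before concluding that $b \notin f(p_1(W))$. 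You instead take the relative Hilbert scheme $\Hilb(M/B)$, whose degree-one pieces already carry a flat universal family of fibrewise one-dimensional subschemes over each connected component $T_j$, so the constancy $(L,Z_t)=d_j$ drops out of the Snapper polynomial with no auxiliary constructions; and your exclusion of $b$ is more direct, since hypothesis (1) applied to the irreducible components of a member $Z_t$ lying over $b$ forces $d_j=0$ outright (note that this shows nefness of $L$ is not actually needed here). What the paper's route buys is an explicit geometric picture --- each member of the family is exhibited as a connected curve contracted by $f$ with positive $L$-degree, which matches how the proposition is applied; what yours buys is brevity and the elimination of the entire flattening diagram. Two small points of hygiene: a subscheme $Z_t$ may have zero-dimensional or embedded components, so the $C_i$ in $(L,Z_t)=\sum_i m_i (L,C_i)$ should be taken to run over the one-dimensional components only (the others do not affect the leading coefficient); and the closedness of $\pi_j(T_j)$ uses properness of $\Hilb^P(M/B)\to B$, which holds because $f$ is a projective morphism.
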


\begin{proof}

There exists some ample divisor $A$ on $B$.
Assume that $C$ is an irreducible curve on $M$ such that $(f^* A, C) = 0$ (i.e.\ $f(C)$ is a point) and that $(L, C) >0$.
There exists some irreducible component $W$ of the universal scheme for the Hilbert scheme $\Hilb (M)$ of $M$ such that $W$ includes $p_2^{-1} ([C])$ where $[C]$ is the point ($\in \Hilb (M)$) which represents the subscheme $C$ of $M$ and such that the diagram Figure \ref{fig:CDdeform} holds:
\clearpage 
\begin{figure}[h]
$$\begin{CD}
W @>\text{embedding}>> M \times \Hilb (M) @>\text{$p_2$}>> \Hilb (M)    \\
@. @VV\text{$p_1$}V    \\
@. M
\end{CD}$$
\caption{The deformation of $C$ in $M$}
\label{fig:CDdeform}
\end{figure}
with the projections $p_1$ and $p_2$ and with the property that  $\dim W = \dim p_2 (W) + 1$.
We set $T: = p_2 (W)$.

First we consider the normalization $n_1 \colon W_n \to W$, $n_2 \colon T_n \to T$ and $n_3 \colon W_n \to T_n$  of the morphism $p_2 \vert_W \colon W \to T$.

Next consider the Stein factorization $W_n \stackrel{s_1}{\to} T' \stackrel{s_2}{\to} T_n$ of the morphism $n_3 \colon W_n \to T_n$.

Lastly consider the flattening $f_1 \colon W'' \to W_n$, $f_2 \colon T'' \to T'$ and $f_3 \colon W'' \to T''$ of the morphism $s_1 \colon W_n \to T'$, where the morphism $f_2$ is birational and the variety $T''$ is nonsingular.
We note that the morphism $f_3 \colon W'' \to T''$ is flat and with only connected fibers.

We put $h \colon = n_1 f_1$.

Thus we have the commutative diagram Figure \ref{fig:CDflattening}:

\clearpage 
\begin{figure}[h]
$$\begin{CD}
W'' @>\text{$f_3$}>> T'' \\
@V\text{$f_1$}VV @VV\text{$f_2$}V \\
W_n @>\text{$s_1$}>> T' \\
@| @VV\text{$s_2$}V \\
W_n @>\text{$n_3$}>> T_n \\
@V\text{$n_1$}VV @VV\text{$n_2$}V \\
(p_2^{-1} ([C]) \subset) W \qquad \qquad @>\text{$p_2 |_{W}$}>> \quad \qquad T (\owns [C]) \\
@V\text{embedding}VV @VV\text{embedding}V \\
M \times \Hilb (M) @>\text{$p_2$}>> \Hilb (M) \\
@VV\text{$p_1$}V \\
M \\
@VV\text{$f$}V \\
B
\end{CD}$$
\caption{The flattening of the deformation}
\label{fig:CDflattening}
\end{figure}

From the flatness of the morphism $f_3 \colon W'' \to T''$, the intersection number $(h^* p_1^* f^* A, F'') = 0$ for every fiber $F''$ of the morphism $f_3 \colon W'' \to T''$, because $(p_1^* f^* A, p_2^{-1} ([C])) = 0$.
Thus, for every fiber $F''$ of the morphism $f_3 \colon W'' \to T''$, the morphism $f$ contracts $p_1 h (F'')$ to one point from the connectedness of $F''$.
In other words, $p_1 h (F'')$ is included in some fiber of $f$.

There exists some ample divisor $A'$ on $W$.
Of course $(A', F) > 0$ for every curve $F$ on $W$.
Because the morphism $h$ is birational, we have that $h(F''')$ is not a point (i.e.\  $(h^* A', F''') > 0$) for some fiber $F'''$ of $f_3 \colon W'' \to T''$.
From the flatness, $(h^* A', F'')>0$ for every fiber $F''$ of $f_3 \colon W'' \to T''$.
Thus every fiber $F''$ of $f_3 \colon W'' \to T''$ cannot be contracted to a point by the morphism $h$.

There exists some fiber $F''_1$ of $f_3 \colon W'' \to T''$ such that $h^{-1}(p_2^{-1} ([C])) \cap F''_1 \ne \emptyset$.
Then $F''_1 \subset h^{-1}(p_2^{-1} ([C])$, because the morphism $p_2 h$ maps $F''_1$ to a point $[C] \in T$.
Consequently $h(F''_1) = p_2^{-1} ([C])$, because $F''_1$ does not contract to a point by the morphism $h$.
Thus $(h^* p_1^{*} L, F''_1) > 0$.
From the flatness of the morphism $f_3 \colon W'' \to T''$, the intersection number $(h^* p_1^{*} L, F'') > 0$ for every fiber $F''$ of the morphism $f_3 \colon W'' \to T''$.

We note that every fiber of $f_3 \colon W'' \to T''$ is mapped in some fiber of $p_2 \vert_W \colon W \to T$.
In other words, every fiber of $p_2 \vert_W$ is swept out by fibers of $f_3$.

So, for every fiber $F$ of $p_2 |_W \colon W \to T$, the locus $p_1 (F)$ is swept out by connected curves $C'$ such that $f(C')$ is one point and that the intersection number $(L, C') > 0$.
(We note that we consider $p_1 h(F'')$ as $C'$ and that $C' = p_1 h (F'') = \Supp ((p_1 h)_* F'')$ from the connectedness of $F''$.)
Thus $b \notin f(p_1 (F))$.
Consequently $b \notin f(p_1 (W))$.
In other words, $p_1(W)$ is disjoint with $f^{-1}(b)$.

The countability of the irreducible components of the Hilbert scheme $\Hilb (M)$ of $M$ implies the assertion.

\end{proof}

\begin{prop}\label{prop:hyp plane}
Let $M$ be a nonsingular projective variety and $C$ a Zariski-closed subset with codimension $\codim (C, M) \geq 2$.
Then there exists some irreducible hyperplane section $H$ such that $H \supset C$.
\end{prop}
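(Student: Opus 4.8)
The plan is to deduce the statement from the irreducibility form of Bertini's theorem. The case $\dim M\le 1$ is vacuous, since then $\codim(C,M)\ge 2$ forces $C=\emptyset$ and any irreducible hyperplane section will do; so I would assume $n:=\dim M\ge 2$, whence $\dim C\le n-2$.

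First, fix an ample divisor $A$ on $M$ and re-embed $M$ by the complete linear system $|mA|$ for a suitably large integer $m$; passing to a high multiple is what makes room for enough hyperplanes through $C$. By Serre's theorems, for $m\gg 0$ the twisted ideal sheaf $\mathcal{I}_C\otimes\mathcal{O}_M(mA)$ is globally generated, and I set $V:=H^0(M,\mathcal{I}_C\otimes\mathcal{O}_M(mA))$. The members of the linear system $\mathfrak{d}:=|V|$ are precisely the hyperplane sections (for the embedding by $|mA|$) that contain $C$. Global generation shows that the base locus of $\mathfrak{d}$ equals $C$ as a set: at a point outside $C$ the sheaf is locally isomorphic to $\mathcal{O}_M(mA)$, so some section does not vanish there, while every section in $V$ vanishes along $C$. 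Hence the rational map $\phi$ attached to $\mathfrak{d}$ is a genuine morphism on the irreducible open set $M\setminus C$, say $\phi\colon M\setminus C\to\mathbf{P}^{r}$ with $r=\dim V-1$.

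Next I would arrange $\dim\phi(M\setminus C)\ge 2$, and this is the step I expect to be the main obstacle. For $m\gg 0$ the sheaf $\mathcal{I}_C\otimes\mathcal{O}_M(mA)$ separates points and tangent vectors of $M\setminus C$, so that $\phi$ is a locally closed embedding of $M\setminus C$ and $\dim\phi(M\setminus C)=n\ge 2$. The numerical plausibility is clear: since $\dim C\le n-2$, the span of $C$ in $|mA|^{\vee}$ grows like $m^{\dim C}=o(m^{n})$, whereas $\dim|mA|$ grows like $m^{n}$, so for $m\gg 0$ there are abundantly many hyperplanes through $C$. The genuinely delicate point is the \emph{uniformity} of the separation as the pair of points of $M\setminus C$ varies; this is exactly the content of the standard very-ampleness-away-from-$C$ statement for $m\gg 0$, and it is what rules out $\mathfrak{d}$ being composed with a pencil.

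Finally, $M\setminus C$ is irreducible and $\dim\phi(M\setminus C)\ge 2$, so the irreducibility version of Bertini's theorem (valid in characteristic $0$) applies to $\phi$: for a general hyperplane in $\mathbf{P}^{r}$ the preimage is irreducible, hence the corresponding general member $H\in\mathfrak{d}$ is irreducible. Moreover $H$ has no divisorial fixed part to worry about, because the base locus $C$ has codimension $\ge 2$, so $H$ is even reduced, i.e.\ an integral hyperplane section. By construction $H\supset C$, and $H$ is a hyperplane section of $M$ for the embedding by $|mA|$; this is the required irreducible hyperplane section.
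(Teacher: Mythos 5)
Your proof is correct, but it takes a genuinely different route from the paper's. The paper passes to a birational modification $f\colon M'\to M$ (in effect the blow-up of the ideal of $C$ followed by a resolution) on which $f^{-1}(C)$ becomes a divisor $C_0$ with $-C_0$ $f$-ample, takes a general irreducible member $H_0\in\vert l(mf^*A-C_0)\vert$ of the resulting ample system by Bertini on the smooth variety $M'$, and pushes down to $H:=f_*(H_0+lC_0)\in\vert lmA\vert$, so that $f^{-1}(H)=\Supp(H_0+lC_0)\supset f^{-1}(C)$. You instead stay on $M$, work with the subsystem $\vert H^0(M,\mathcal{I}_C\otimes\mathcal{O}_M(mA))\vert\subseteq\vert mA\vert$, and apply the irreducibility form of Bertini to the morphism $\phi$ this defines on $M\setminus C$. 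These are essentially dual presentations of one idea (the paper's $M'$ resolves the blow-up of $\mathcal{I}_C$, and $mf^*A-C_0$ corresponds to your twisted ideal sheaf), but yours avoids resolution of singularities altogether, at the cost of the separation statement you flag. That step does close uniformly by a standard trick: write $mA=(m-m_0)A+m_0A$ with $\mathcal{I}_C\otimes\mathcal{O}_M((m-m_0)A)$ globally generated (Serre) and $m_0A$ very ample; multiplying a section of the first factor that is nonzero at the given points of $M\setminus C$ by sections of the second that separate those points (or tangent vectors) yields sections of $\mathcal{I}_C\otimes\mathcal{O}_M(mA)$ doing the required separation, so $\phi$ is an embedding off $C$ and $\dim\phi(M\setminus C)=\dim M\ge 2$, as your Bertini application needs. (For the proposition as stated only irreducibility of $H$ is required, so your closing remark on reducedness is a harmless extra.)
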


\begin{proof}
We take some ample divisor $A$ on $M$.
We have a birational morphism $f \colon M' \to M$ such that $M'$ is a nonsingular projective variety, that $f^{-1} (C)$ is divisorial with only simple normal crossings and  that there exists an effective divisor $C_0$ with the property that $\Supp (C_0) = f^{-1} (C)$ and $- C_0$ is $f$-ample.
Then $mf^* A - C_0$ is ample for a sufficiently large integer $m$.
For a sufficiently large and divisible integer $l$, the divisor $lmA$ is very ample and there exists a member $H_0 \in \vert l(mf^* A - C_0) \vert$ which is very ample and irreducible.
We put $H \colon = f_* (H_0 + l C_0)$.
Then $H \in \vert l m A \vert$.

The locus $f^{-1}(H)$ coincides with $\Supp (f^* H) = \Supp(H_0 + l C_0)$.
Thus $f^{-1}(H) \supset f^{-1} (C)$.
\end{proof}

\noindent{\bf Data Availability}

\noindent
No data were used to support this study.

\noindent{\bf Conflict of Interests}

\noindent
The author declares that there are no conflicts of interest.

\noindent{\bf Acknowledgments}

\noindent
In this series of research, the author was supported by the research grant of Gifu Shotoku Gakuen University in the years 2019 and 2020.
The author would like to thank the referee who carefully read the paper and gave the suggestions to improve the explanation.

\bigskip
Faculty of Education, Gifu Shotoku Gakuen University

Yanaizu-cho-Takakuwa-Nishi, Gifu, Gifu Prefecture 501-6194, Japan

fukuda@gifu.shotoku.ac.jp

\end{document}